\newtheorem{definition}{Definition}
\newtheorem{theorem}{Theorem}
\newtheorem{lemma}{Lemma}
\newtheorem{example}{Example}
\providecommand{\keywords}[1]{\textbf{\textit{Keywords---}} #1}
\author{RaGon Ebker\\ email \href{ragon@disroot.org}{raphaelragon.ebker@studium.fernuni-hagen.de}} 
\title{Solving non-separable polynomials over the field of Puiseux series via golden lifting}
\begin{document}
\maketitle

\begin{abstract}\label{abstract}
We develop an iterative method to calculate the roots of arbitrary polynomials over the field of Puiseux series including non-separable ones. The method works by transforming the polynomial and its roots into a special form and then extracting a new, univariate polynomial that contains information about our roots. We also provide a working implementation of the algorithm in Python.
\end{abstract}
\keywords{algebraic geometry,  puiseux series, newton puiseux, power series}
\section{Notation}\label{notation}
Let $\mathbb{N}$ be the set of natural numbers including $0$.
Let $K$ be a field and $K((x^\frac{1}{n}))$ be the field of Puiseux series over $K$. Let the elements of $K((x^\frac{1}{n}))$ have the form $y = \sum_{k=k_0} ^\infty b_{k}x ^{\frac{k}{n}}, n \in \mathbb{Z}$. When $y$ has just finitely many terms, we call $d_x$ the degree of $y$. Let $Q : K((x)) \mapsto K((x))$ be a polynomial over the field of Puiseux series, $Q(y) = a_{d_y} y^{d_y} + ... + a_1 y + a_0, d_y \in \mathbb{N^+}$, $d_y$ being the degree of $Q$.
Let $\alpha =  \sum_{k=0} ^\infty b_{k}x ^{\frac{k}{n}}, n \in \mathbb{Z}$ be a root of $Q$.
\begin{definition}[s-multiplicity]
A polynomial has $s-$multiplicty, when there exist $s$ roots $\alpha_1,...,\alpha_s$ of $Q$ with the coefficients $b_0$ all being $0$.
\end{definition}
\begin{definition}[s-plus-multiplicity]
A polynomial $Q$ has $s^+-$Multiplicty, when there exist $s^+$ roots $\alpha_1,...,\alpha_{s^+}$ of $Q$, $s^+ \in \{1,...,s\}, s$ being the $s-$ Multiplicity of Q with the coefficients $b_0$ all being $0$ and the term $b_1$ has the same valuation for all $\alpha_j, j \in \{1,...,s^+\}$.
\end{definition}
\begin{example}
    The polynomial $Q(y) = (y-(1+x+x^2))(y-x^{0.5})(y-x^{0.6})(y-x^{0.5} + x^2)$ has $s$-Multiplicty 3 and $s^+$-Multiplicty 2.
\end{example}
Let $v: K((x)) \mapsto \mathbb{Q}$ be the valuation map defined by $v(y) = k_0$. 
\section{Introduction}\label{introduction}
Let's imagine for a moment, that we have an algorithm to calculate the smallest root of a polynomial $Q$, for example, $Q(y) = (y - (1+x+x^2)) (y-(2+x+x^2))$ over the field of Puiseux Series. In this case, the smallest root is $(1+x+x^2)$. Small means the root with the leading coefficient that has the smallest valuation, in this case. But we can only calculate its term with the highest evaluation, i.e. $1$. 
\newline
How can we proceed to calculate the next terms of the root? The answer may seem obvious: By shifting the roots of the polynomial $Q$. Now we have the shifted Polynomial $$Q_{shift} = (y - (x+x^2)) (y-(1x+x^2))$$ and we can easily calculate the next term with our algorithm: $x$. 
In this paper we are going to explore this idea, mainly:
\newline
1. How to develop an algorithm to calculate the smallest root under certain conditions
\newline
2. How to transform and shift our polynomial to fulfill this condition
\medskip
\newline
A good overview of this process can also be seen in section \ref{algorithm}.
Comparable methods to solve this problem over the Puiseux series and power series exist for example in the form of Hensels Lemma and its developed versions \cite{neiger} or in the Newton Puiseux method \cite{brieskorn2012plane}. 
\section{Main Result}\label{main}
In this section, we show the main result. It works by reducing our polynomial to a smaller one, under certain requirements. In section 4 we are going to see how we can transform every polynomial into one that satisfies these exact requirements. We calculate the root of the polynomial coefficient by coefficient just as in the original Newton-Puiseux algorithm.
\begin{theorem}[Golden Lifting]
Let $Q : K((x)) \mapsto K((x))$ be a polynomial over the field of Puiseux series, $Q(y) = a_{d_y} y^{d_y} + ... + a_1 y + a_0, d_y \in \mathbb{N^+}$. Let $\alpha_1,...,\alpha_n$ be the roots of our polynomial and let $v(a_i) \geq 0, i \in \{1,...,d_y\}$. Now we assume $v(\alpha_j) \geq e > 0 \  , j \in \{1,...,s\}$, $s \in \{1,...,d_y-1\} , e \in \mathbb{Q}$, with $v(\alpha_j) = e$ for $j \in \{1,...,s^+\}, s^+ >1$.  $s$ is exactly the $s-$multiplicity of $\alpha_j$, $s^+$ the $s^+$-Multiplicity. $e$ is the smallest valuation of all $\alpha_j$.  We can further represent those roots as $\alpha_j = c_{1_j} x^{e} + c_{2_j} x^{e + \gamma_{2_j}} + c_{3_j} x^{e + \gamma_{2_j} + \gamma_{3_j}} ..., \gamma_j \in \mathbb{Q}, j \in \{1,...,s^+\}.$ 
\newline
Now the aim of this theorem is to calculate the $c_{1_j}, j \in \{1,...,s^+\} $.
\newline
We further assume $v(\alpha_j) = 0, \ \forall j \in \{s+1,...,d_y\}.$We now have a look at the coefficients of $Q$. We remember  
$$a_i = b_{i_0}x^{\delta_1} + b_{i_1}x^{\delta 1 + \delta_2} + ..., \ \delta_k \in \mathbb{Q}, k \in \mathbb{N^+} .$$ 
\bigskip
\newline
Let now $Q_R(x) = b_{s_{min}}x^{d_y} + ... + b_{1_{min}} x$  be such that $b_{i_{min}} := \min_{l \in \{1,...,d_x\}} \{b_{i_l} \neq 0 \}, i \in \{1,...,s\}$.
\medskip
\newline
Then the $s$ roots of $Q_R$ are exactly the $c_{1_j}$.
Then $Q_R$ is of degree $s$, and $c^+$ roots are exactly the $c_{1_j} j \{1,...,s^+\}$ belonging to the the roots with the lowest valuation. The other $s-s^+$ roots are zero.
\end{theorem}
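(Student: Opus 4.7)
The plan is to perform a Newton-polygon style substitution and extract the initial form of $Q$ along the edge of slope $-e$. I would set $y = x^{e} z$, so that
$$Q(x^{e} z) = \sum_{i=0}^{d_y} a_i \, x^{ie} \, z^{i},$$
and then collect terms by their $x$-exponent. Writing $a_i = b_{i_{min}} x^{v(a_i)} + (\text{higher order})$, each summand contributes $b_{i_{min}} x^{v(a_i)+ie} z^{i}$ as its lowest-order term, so $Q_R(z)$ should arise as the coefficient of the minimal power of $x$ appearing in $Q(x^{e} z)$.

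First I would establish the key numerical inequality via Vieta's formulas. Partition the $d_y$ roots into three groups: the $d_y - s$ roots of valuation $0$, the $s^+$ roots of valuation exactly $e$, and the $s - s^+$ roots of valuation strictly greater than $e$. Using $a_i/a_{d_y} = (-1)^{d_y - i}\, e_{d_y - i}(\alpha_1, \dots, \alpha_{d_y})$ and a case analysis on how many roots of each class may participate in a minimising subset, one obtains
$$v(a_i) + i e \;\geq\; s e \qquad \text{for all } i \in \{0, \dots, d_y\},$$
with equality possible only for $i \in \{s - s^+, \dots, s\}$, and forced at the extreme indices $i = s - s^+$ and $i = s$ because there the minimising subset of roots is unique and contributes a nonzero product of leading coefficients.

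Next, dividing $Q(x^{e} z)$ by $x^{se}$ and reducing modulo positive powers of $x$ leaves exactly
$$\sum_{i = s - s^+}^{s} b_{i_{min}} z^{i},$$
which is the polynomial $Q_R(z)$ (the coefficients $b_{i_{min}}$ for indices sitting strictly above the polygon edge carry a factor of positive $x$-power after the substitution and vanish in the reduction). This polynomial has degree $s$, nonzero leading coefficient $b_{s_{min}}$, and an explicit factor $z^{s - s^+}$, which already supplies the $s - s^+$ zero roots. For each $j \in \{1, \dots, s^+\}$, substituting $\alpha_j = c_{1_j} x^{e} + (\text{h.o.t.})$ into $Q$ and reading off the coefficient of $x^{se}$ gives $Q_R(c_{1_j}) = 0$, so the remaining $s^+$ roots of $Q_R$ are exactly the $c_{1_j}$ (with multiplicities matching those of the leading coefficients).

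The main obstacle I anticipate is the Newton-polygon bookkeeping at the endpoints: one must show that $e_{d_y - s}(\alpha)$ and $e_{d_y - s + s^+}(\alpha)$ have unique minimising subsets whose products of leading coefficients do not vanish, so that $v(a_s) = 0$ and $v(a_{s - s^+}) = s^+ e$ hold as equalities. Cancellations at intermediate indices do not threaten the argument, because they only push those contributions strictly above the polygon edge; the degree of $Q_R$ and its two endpoint monomials are already pinned down, and the multiplicity-counting step only requires the resulting polynomial to vanish at each $c_{1_j}$.
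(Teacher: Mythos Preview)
Your approach is essentially the same as the paper's: both substitute $y=\beta x^{e}$ (equivalently $y=x^{e}z$) into $Q$, strip off higher-order terms in $x$, and identify the surviving initial form with $Q_R$. The paper's argument consists of the single reduction
\[
Q(\beta x^{e}) \equiv b_{s_{min}}(\beta x^{e})^{s}+\cdots+b_{1_{min}}(\beta x^{e})+b_{0_{min}} \pmod{x^{e'}}
\]
and asserts, without details, that the truncated pieces $a_i^{*}(\beta x^{e})^{i}$ and the terms with $i>s$ vanish modulo $x^{e'}$. Your Vieta/Newton-polygon computation is exactly what is needed to justify those disappearances: the inequality $v(a_i)+ie\ge se$ is what makes the cutoff at $i=s$ and the removal of higher-order parts of each $a_i$ legitimate. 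You also make explicit the endpoint analysis (uniqueness of the minimising subsets at $i=s$ and $i=s-s^{+}$), which the paper does not address and which is needed to conclude that $Q_R$ genuinely has degree $s$ and carries the factor $z^{s-s^{+}}$. So your write-up is the same strategy carried out with the bookkeeping filled in; nothing substantively different, just more complete.
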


\begin{proof}
We know that $c_{1_j}, j \in \{1,...,s^+\}$ being the correct coefficient of a root of $Q$ is equivalent to $Q(c_{1_j} x^{e}) \mod  x^{e'} \equiv 0$, i.e. $c_{1_j}x$ is a root of $Q(y) \mod  x^{e'}$. $e' \in \mathbb{Q}$ is in this case any number bigger than $s \cdot e$ and smaller than the exponent of any term of $Q$, that has a valuation bigger than $e$.  This is can also be seen by looking at the polynomial in its linear factorization.
We show that $Q(\beta x^e) = Q_R(\beta x^e) \mod x^{e'}$:
\begin{align*}
     Q(\beta x^e) &\equiv  a_{d_y} (\beta x^e)^{d_y} + ... + a_1 \beta x^e + a_0 \mod x^{e'} \\
         &\equiv a_{d_y} ^* (\beta x^e)^{d_y} + ... + a_1^* (\beta x^e) + a_0^* + \left( b_{d_{y_{min}}}(\beta x^e) ^{d_y} + ... + b_{1_{min}}(\beta x^e) + b_{0_{min}} \right) \mod x^{e'} \\
         &\equiv b_{d_{y_{min}}}(\beta x^e)^{d_y} + ... + b_{1_{min}}(\beta x^e) + b_{0_{min}} \mod x^{e'} \\
         &\equiv b_{s_{min}}(\beta x^e)^s + ... + b_{1_{min}}(\beta x^e) + b_{0_{min}} \mod x^{e'} \\
\end{align*}
For $a_i^{*} = a_i - b_{i_{min}}, i \in \{1,...,d_y\}$.
\end{proof}
So to conclude: When we solve $Q_R$ we obtain the solutions $c_{1_j}x^e$. We will explain how to solve $Q_R$ with the help of shifts in section 4.
\medskip
To apply this theorem, we need to fulfill the condition $v(\alpha_j) \geq e > 0, j \in \{1,...,s\}$ with $v(\alpha_j) = e$ for at least one $j \in \{1,...,s\}$ and $v(\alpha_k) = 0, \ \forall k \in \{s+1,...,d_y\}$ (1).
We also need to obtain $e$ (2) and $s$ (3).   Once we have applied this step, we can extract (1), (2), and (3), for the next step, from the set of roots of $Q_R$.
We can proceed equally to encounter the next coefficients $c_{2_k},...,c_{d_{y_l}}$, after transforming our polynomial to fulfill our assumption again. $k$ and $l$ are the corresponding indices.
\section{Initial shift of the Polynomial}
In this section, we are going to prove how to transform a polynomial $Q: K((x^\frac{1}{n})) \mapsto K((x^\frac{1}{n}))$ into one that fulfills the condition (1), if it does not already, and how to extract (2) and (3). 
We first start with a commonly know Lemma:
\begin{lemma}
Suppose we have a polynomial $P$ over the $K$, with $\alpha_1,...,\alpha_n, n \in N^+$ being its roots. Then for any constant $c \in K$ the polynomial with roots $\alpha_1 + c,...,\alpha_n +c $ has the form
    $$Q(y)=P(y-c)=a_{n}(y-c)^{n}+a_{n-1}(y-c)^{{n-1}}+\cdots +a_{{0}}.$$
    and the polynomial with the roots $c\alpha_1,...,c\alpha_n$ has the form $$Q(y) = a_{n}y^{n}+a_{n-1}cy^{{n-1}}+\cdots +a_{{0}}c^{n}.$$
\end{lemma}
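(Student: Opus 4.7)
The plan is to read both transformations directly off the factored form of $P$. Since we are working over the Puiseux series field, which is an integral domain (in fact a field), I would write $P(y) = a_n \prod_{i=1}^{n}(y - \alpha_i)$ and then use substitution to obtain the two claimed polynomials together with their root sets, rather than attacking coefficients one at a time.

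For the first claim, I would simply define $Q(y) := P(y-c)$ and evaluate at $y = \alpha_i + c$ to get $Q(\alpha_i + c) = P(\alpha_i) = 0$. Since $Q$ has the same degree $n$ as $P$ and $n$ roots (with multiplicity) have already been exhibited via the factorization $Q(y) = a_n \prod_{i}(y - (\alpha_i+c))$, these are all of the roots. The explicit form $a_n(y-c)^n + a_{n-1}(y-c)^{n-1} + \cdots + a_0$ is then just the literal expansion of $P(y-c)$.

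For the second claim, assuming $c \neq 0$ first, the plan is to substitute $y \mapsto y/c$ in $P$ and clear denominators by multiplying by $c^n$. The resulting polynomial is $c^n P(y/c) = \sum_{i=0}^{n} a_i c^{n-i} y^i = a_n y^n + a_{n-1} c y^{n-1} + \cdots + a_0 c^n$, whose roots satisfy $y/c = \alpha_i$, i.e., $y = c\alpha_i$. The degenerate case $c = 0$ reduces to $a_n y^n$, whose unique root is $0 = c\alpha_i$ with multiplicity $n$, which is consistent with the stated formula.

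The main obstacle here is essentially bookkeeping rather than mathematics: one must justify that scaling by $c^n$ does not alter the root set (which holds as long as $c^n$ is a unit, i.e. $c\neq 0$) and then handle the $c=0$ boundary case separately. Beyond that, both claims are immediate consequences of substitution into the factorization of $P$, and the argument goes through verbatim over any integral domain containing $K$, in particular over $K((x^{1/n}))$.
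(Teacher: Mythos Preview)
Your argument is correct and is the standard one: factor $P$, substitute, and read off the new roots; the case split on $c=0$ for the scaling part is also appropriate. The paper itself does not prove this lemma at all---it is introduced as a ``commonly known'' fact and used without justification---so your proposal supplies strictly more detail than the original.
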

\medskip
We now start with (1).
We can check (1) by calculating the constant parts of the roots via $Q_{|x=0}$. If we have at least one root of $Q_{|x=0}$ which is unequal to zero and one that is equal to zero we fulfill the conditions. Checking $Q_{|x=0}$, mainly to get the constant part of a root of $Q$, is a common technique and can be seen by looking at $Q$ in linear factor representation. 
\medskip
\newline
CASE 1 
\newline
First we want to ensure that $v(\alpha_j) = 0$ holds for at least one $i \in \{1,...,d_y\}$.  What if all roots of  $Q_{|x=0}$ are zero for all $j \in \{1,...,d_y\}$? In this case, we shift our polynomials via multiplication as in Lemma 1.  For this end, we actually need to know $e$, the valuation of $\alpha_j$, which we can obtain via the Newton Polygon of our polynomial. We thus multiply our roots with $a^e$.
\medskip
\newline
Going back to our now eventually shifted polynomial, which has $v(\alpha_j) = 0$ holds for at least one $j \in \{1,...,d_y\}$.
\newline
\medskip
\newline
CASE 2
\newline
Now we check if $v(\alpha_j) > 0$ for at least one $j \in \{1,...,d_y\}$ Suppose we have $v(\alpha_j) = 0, \forall i \in \{1,...,n\}$, then we can get the constant term of the $\alpha_j$ by evaluating $Q_{|x=0}$ and taking its roots, as we have already discussed. Once we have obtained all the constant parts $c_j$ of $\alpha_j$ we are going to use them to shift our polynomial with help of Lemma 1.
If we end up with a polynomial with $v(\alpha_j) > 0$ for all $j \in \{1,...,d_y\}$, we go back to case 1. If not we have one with the desired condition.
\medskip
\newline
Now we can finally talk about the case when (1) is fulfilled:
We calculate the roots of $Q_{|x=0}$ or take the already calculated roots, and one of them is now unequal to zero. Let's call it $\alpha_k, k \in \{1,...,d_y\}.$ In this step we already obtain the $s$ -multiplicity of that root, which is exactly the multiplicity we need for our next step. After this calculation, we shift again, and so on, reaching our root iteratively.
\newline
\medskip
\section{Algorithm}\label{algorithm}
In this section, we are going to explore the algorithm in detail. An implementation can be found at \cite{ragon}. First, we have a look at the flow diagram in figure
\ref{fig:flowfigure}.
\begin{figure}\label{flowfigure}
\centering
\includegraphics[width=1\linewidth]{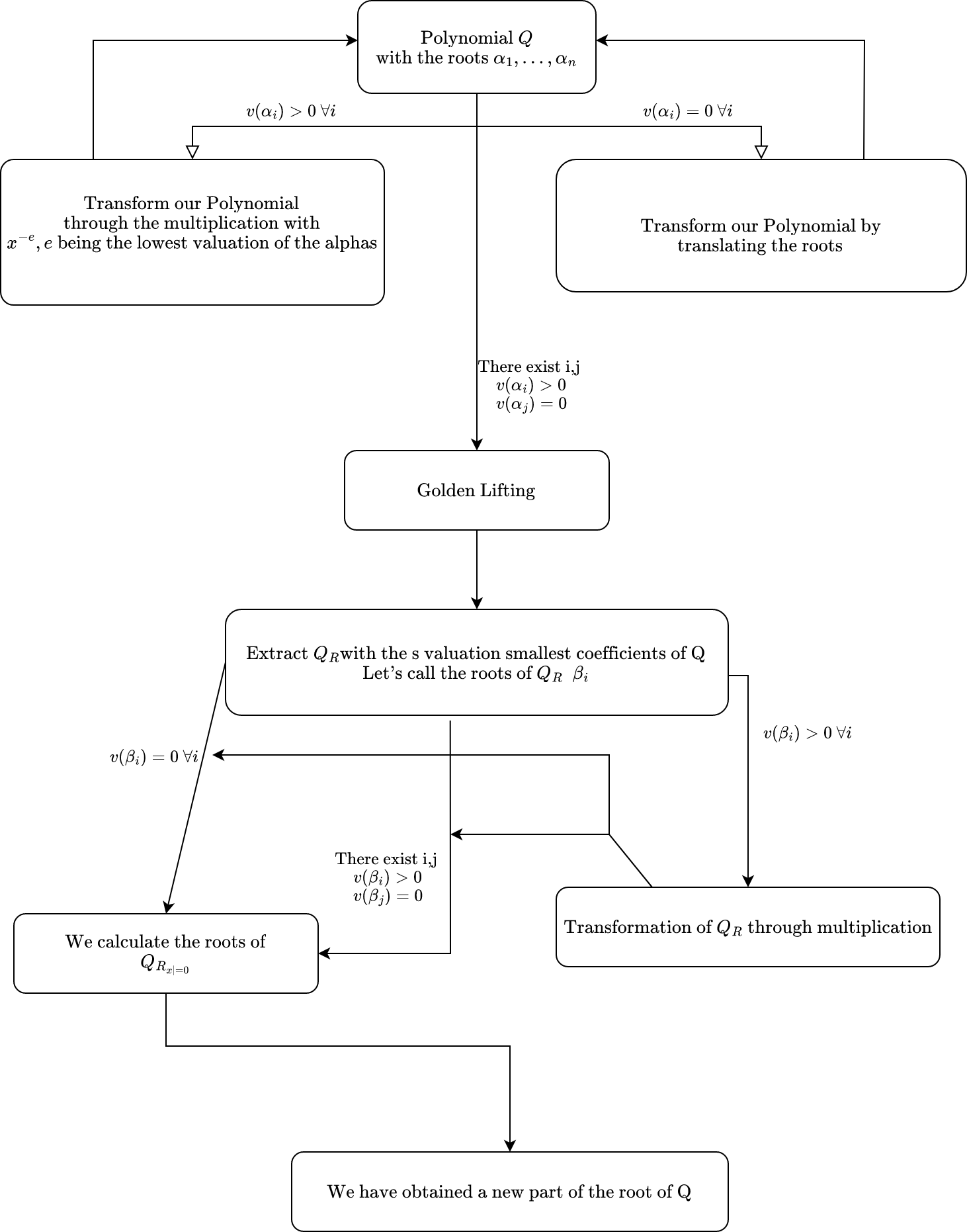}
\caption{\label{fig:flowfigure}Flowchart of the algorithm}
\end{figure}
\newline
\begin{algorithm}[H]
\caption{info class}\label{alg:class}
def info\_constructor(self,d,root\_dict\_list,x\_lift): \\
\ \   self.d = d \\
 \ \  self.root\_dict\_list = root\_dict\_list \\ 
\   \ self.x\_lift = x\_lift \\
\end{algorithm}
For the implementation, we first define an info class that saves all of our important information. When initializing the algorithm, we create an object from the info class, with specified precision $d$, an empty root\_dict\_list, and an x\_lift of $0$. The current alpha can always be calculated by combining the parts of our root from root\_dict\_list. We then give the info object to the method calculate\_smallest\_root, our main method:
\newline
\begin{algorithm}[H]
\caption{calculate\_smallest\_root}\label{alg:calculate}

\SetKwInOut{Input}{Input}
\SetKwInOut{Output}{Output}
\Input{Polynomial $p$, info object }
\Output{The info object that contains, among other information, the root alpha of $p$ with the lowest valuation}
p\_{shift} $\gets$ calculate\_initial\_shift(p,info) \;
\uIf{p\_shift(info.alpha) == 0}{
    return
  }
\For{$i \gets 1$ \KwTo info.d}{
    info.d $\gets$ info.d -1 \;
    p\_shift = shift\_horizontally(p\_shift,calculate\_h\_shift(info)) \;
    multiplicity = info.last\_root.multiplicity \;
    golden\_lifting(p\_shift,multiplicity,info) \;
    \uIf {p\_shift(info.alpha) == 0}{
    return 
    }
    }
    return
\end{algorithm}
As we can see in \ref{alg:calculate}, we follow the process described in sections 3 and 4. We first calculate the initial shift, to bring our polynomial into the form $v(\alpha_j) \geq e > 0, j \in \{1,...,s\}$ with $v(\alpha_j) = e$ for at least one $j \in \{1,...,s\}$ and $v(\alpha_k) = 0, \ \forall k \in \{s+1,...,d_y\}$, as described in (1).
In this process, we already obtain the first part of our first root alpha. This also brings us the $s-$ multiplicity of the next part of the root.
In the for loop, starting on line 4, we start shifting our polynomial horizontally, so it attains the form (1) again. Shifting a polynomial horizontally means adding a constant to its root.
\begin{algorithm}[H]
\caption{calculate\_initial\_shift}\label{alg:calculate_i}
\SetKwInOut{Input}{Input}
\SetKwInOut{Output}{Output}
\Input{Polynomial $p$, info object}
\Output{Shifted polynomial $p$ or p\_shift}
root\_dict, shift\_number = get\_sub\_x\_root(p,info) \\
\uIf {shift\_number == 0}{
    info.root\_dict\_list.append(root\_dict) \\
    return $p$
    }
\Else {
      slopes $\gets$ get\_newton\_slopes(p) \\
      min\_slope $\gets$ min\_not\_zero(slopes) \\
      p\_shift $\gets$ shift\_vertically(p,x**(-min\_slope))  \\
      info.x\_lift $\gets$ min\_slope \\
      info.d $\gets$ info.d-1 \\
      info  $\gets$ calculate\_smallest\_root\_q\_x(p\_shift,info) \\
      return p\_shift
}
\end{algorithm}
The initial shift method calculates the roots of $p_{|x=0}$ in line 1 with get\_sub\_x\_root. Then we shift our polynomial vertically if all of the roots, in this case described by shift\_number, are unequal to zero. Vertical shifting means the multiplication of the roots with a constant.
\newline
\begin{algorithm}[H]
\caption{calculate\_smallest\_root\_q\_x}\label{alg:calculate_q}
\SetKwInOut{Input}{Input}
\SetKwInOut{Output}{Output}
\Input{Polynomial $p$, info object}
\Output{Shifted polynomial $p$ or p\_shift}
root\_dict, shift\_number = get\_sub\_x\_root(p,info) \\
\uIf {shift\_number == 0}{
    info.root\_dict\_list.append(root\_dict) \\
    return
    }
\Else {
    slopes $\gets$ get\_newton\_slopes(p)
    
    min\_slope $\gets$ min\_not\_zero(slopes) \\
    p\_shift $\gets$ shift\_vertically(p,x**(-min\_slope))  \\
    info.x\_lift $\gets$ min\_slope \\
    info.d $\gets$ info.d-1 \\
    info  $\gets$ calculate\_smallest\_root\_q\_x(p\_shift,info) \\
    return
}
\end{algorithm}
Algorithm \ref{alg:calculate_q} is the same as \ref{alg:calculate_i} but it does not return anything. 
\newline
\begin{algorithm}[H]
\caption{golden\_lifting}\label{alg:golden}
\SetKwInOut{Input}{Input}
\SetKwInOut{Output}{Output}
\Input{p\_shift,multiplicity,info}
    shifted\_coeffs $\gets$ reversed(p\_shift.coeffs()) \\
    cutoff\_coeffs $\gets$ [] \\
    \For{$i \gets 1$ \KwTo info.d}{
    coeff $\gets$ shifted\_coeffs[i] \\
    cutoff\_coeffs.append(term\_with\_lowest\_valuation(coeff))
    }
    new\_poly $\gets$ Monic(Poly(reversed(cutoff\_coeffs))) \\
    \uIf {multiplicity $>$ 1}{
        calculate\_smallest\_root\_q\_x(new\_poly,info) \\
        return
    }
    \Else {
    r $\gets$ roots(new\_poly) \\
    info.root\_dict\_list.append(order\_roots(r))
    return
    }
\end{algorithm}
The algorithm \ref{alg:golden} describes exactly the process of extracting $p_R$ from our polynomial $p$, just as we extracted $Q_R$ from $Q$ in theorem 1. Here it is also possible to check if the polynomial $p_R$, which is called new\_poly in the code and pseudocode, has a very simple form for example when it consists only of a single root with multiplicity $s$. This would mean that its $s$-multiplicity and $s^+$ multiplicity are the same. We then start the same process as in the initial\_shift method to calculate the roots of $p_R$ i.e. new\_poly. 

\end{document}